\theoremstyle{plain}
\newtheorem{thm}{Theorem}
\newtheorem{prop}[thm]{Proposition}
\newtheorem{cor}[thm]{Corollary}
\newtheorem{lem-defi}[thm]{Lemma-Definition}
\theoremstyle{remark}
\newtheorem{e.g.}[thm]{Example}
\theoremstyle{definition}
\newcommand{\mcB}{{\mathcal B}}
\newcommand{\mcL}{{\mathcal L}}
\newcommand{\mcO}{{\mathcal O}}
\newcommand{\mcP}{{\mathcal P}}
\newcommand{\mcV}{{\mathcal V}}
\newcommand{\mcW}{{\mathcal W}}
\newcommand{\mcX}{{\mathcal X}}
\newcommand{\mcY}{{\mathcal Y}}
\newcommand{\C}{{\mathbb C}}
\newcommand{\mbP}{{\mathbb P}}
\newcommand{\mbR}{{\mathbb R}}
\newcommand{\mbZ}{{\mathbb Z}}
\newcommand{\Pic}{\text{Pic}}
\begin{document}

\title{Deformation of product of complex Fano manifolds}
\author{Qifeng Li}
\date{}
\maketitle

\begin{abstract}
Let $\mcX$ be a connected family of complex Fano manifolds. We show that if some fiber is the product of two manifolds of lower dimensions, then so is every fiber. Combining with previous work of Hwang and Mok, this implies immediately that if a fiber is (possibly reducible) Hermitian symmetric space of compact type, then all fibers are isomorphic to the same variety.
\end{abstract}

\textbf{Keywords:} Deformation rigidity; Product structure; Fano manifolds.

\textbf{Mathematics Subject Classification (2000):} 14D06, 32G05, 14J45.


\bigskip

We consider varieties defined over the field of complex numbers $\C$. The aim of this short note is to show the following result.

\begin{thm}\label{thm. deform Fano product}
Let $\pi: \mcX\rightarrow S\ni 0$ be a holomorphic map onto a connected complex manifold $S$ such that all fibers are connected Fano manifolds and $\mcX_0\cong\mcX'_0\times\mcX''_0$. Then there exist unique holomorphic maps
\begin{eqnarray*}
f': \mcX\rightarrow\mcX' & \pi': \mcX'\rightarrow S, \\
f'': \mcX\rightarrow\mcX'' & \pi'': \mcX''\rightarrow S,
\end{eqnarray*}
such that $\mcX'_0$ and $\mcX''_0$ coincide with the fibers of $\pi'$ and $\pi''$ at $0\in S$, the morphism $\pi=\pi'\circ f'=\pi''\circ f''$ and $\mcX_t\cong\mcX'_t\times\mcX''_t$ for all $t\in S$.
\end{thm}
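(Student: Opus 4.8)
The plan is to realise the decomposition $\mcX_0\cong\mcX'_0\times\mcX''_0$ as a pair of transverse fibrations of $\mcX_0$, to spread each of them out over $S$ by means of the relative Douady space (relative Hilbert scheme) of $\mcX/S$, and to exploit at every step the vanishing $H^1(F,\mcO_F)=0$ available for any Fano manifold $F$. As is customary for a family of Fano manifolds I take $\pi$ to be smooth and proper, so that $\mcX_t$ runs through a smooth family of compact complex manifolds, polarised relatively by $-K_{\mcX/S}$; since $H^i(\mcX_t,\mcO_{\mcX_t})=0$ for $i>0$, the Picard number is constant along $S$. Denote by $p'\colon\mcX_0\to\mcX'_0$ and $p''\colon\mcX_0\to\mcX''_0$ the two projections, whose fibres are the copies $\{a'\}\times\mcX''_0\cong\mcX''_0$ and $\mcX'_0\times\{a''\}\cong\mcX'_0$; write $d'=\dim\mcX'_0>0$.

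\emph{The picture near $0$.} Fix $a'\in\mcX'_0$ and put $Z=\{a'\}\times\mcX''_0\subset\mcX_0\subset\mcX$. Since $N_{Z/\mcX_0}\cong\mcO_Z^{\oplus d'}$, one has $h^0(Z,N_{Z/\mcX_0})=d'$ and $h^1(Z,N_{Z/\mcX_0})=d'\cdot h^1(\mcX''_0,\mcO_{\mcX''_0})=0$; hence the relative Douady space of $\mcX/S$ is smooth over $S$ at $[Z]$. A Kodaira--Spencer computation shows that the locus of points $[\{a'\}\times\mcX''_0]$ is a connected component of the Douady space of $\mcX_0$, canonically isomorphic to $\mcX'_0$; consequently, over a neighbourhood of $0$, a component $\pi'\colon\mcX'\to S$ of the relative Douady space is smooth and proper with $(\mcX')_0=\mcX'_0$, and its fibres remain Fano because $-K$-ampleness is open in smooth proper families. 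Let $\mcZ\subset\mcX'\times_S\mcX$ be the universal family; the projection $\mcZ\to\mcX$ is proper and restricts over $0$ to the given isomorphism $\mcX'_0\times\mcX''_0\xrightarrow{\sim}\mcX_0$. A proper $S$-morphism of manifolds smooth over $S$ which is an isomorphism on the central fibre is \'etale near that fibre — its relative differential there is an isomorphism of vector bundles — hence an isomorphism near it; so $\mcZ\cong\mcX$ near $\mcX_0$, giving $f'\colon\mcX\to\mcX'$ with $\pi=\pi'\circ f'$ near $0$ and with fibres small deformations of $\mcX''_0$. Running the same construction with $p''$ produces $f''\colon\mcX\to\mcX''$ and $\pi''\colon\mcX''\to S$. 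Then $(f',f'')\colon\mcX\to\mcX'\times_S\mcX''$ is the given isomorphism over $0$, so by the same argument an isomorphism over a neighbourhood of $0$; hence $\mcX_t\cong\mcX'_t\times\mcX''_t$, with both factors Fano, for all $t$ near $0$.

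\emph{Globalisation over connected $S$.} The components $\mcX'$ and $\mcX''$ are defined and proper over all of $S$. Let $U\subseteq S$ be the open set of $t$ over which $\pi'$ and $\pi''$ are smooth and $(f',f'')$ is an isomorphism; then $0\in U$, while $\Sigma:=S\setminus U$ is a closed analytic subset (the image, under proper maps, of the loci where the relevant Douady families fail to be smooth over $S$ or where the natural maps fail to be isomorphisms), hence nowhere dense, and over $U$ the morphisms $f',f'',\pi',\pi''$ already realise the asserted decomposition. To conclude $\Sigma=\varnothing$ it suffices to prove: whenever $\Delta\to S$ is a holomorphic disc with $\Delta\setminus\{0\}\subset U$, one has $0\in U$ as well; indeed, for any $s\in\Sigma$ a generic small disc through $s$ has punctured image contained in $U$, since $\Sigma$ is analytic and nowhere dense. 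In the disc case one takes the flat limits in $\mcX_0$ of the leaves $\{a'\}\times\mcX''_t$ and of the fibres of $f''$ (available by properness of the relevant components of the Douady space); these limits sweep out $\mcX_0$, and the crux is that \emph{no leaf degenerates} — each limit is again a smooth Fano subvariety of the same numerical type with trivial normal bundle, so the $h^1$ of its normal bundle stays $0$, $\pi'$ and $\pi''$ stay smooth over all of $\Delta$, and $(f',f'')$ stays an isomorphism over all of $\Delta$. I expect this non-degeneration to be the main obstacle of the argument, and it must use Fano-ness essentially: the anticanonical polarisation is preserved under flat limits, adjunction (a leaf has trivial normal bundle, so $-K$ of the leaf is the restriction of $-K_{\mcX_t}$) forces the limit leaves to be Fano, and boundedness of Fano manifolds of fixed dimension and anticanonical degree together with the rigidity $h^1(N)=0$ should prevent the leaves from breaking or from having jumping cohomology.

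\emph{Uniqueness.} Any $S$-morphism $\mcX\to\mcY'$ restricting to $p'$ on $\mcX_0$ must contract exactly the fibres of $f'$ — the only positive-dimensional complete subvarieties through a general point of $\mcX$ in the numerical class of a fibre of $f'$ — and therefore factors through $f'$ by the universal property of the Douady space; comparison of central fibres then identifies $\mcY'$ with $\mcX'$ compatibly with the given data, and symmetrically for $f''$ and $\pi''$.
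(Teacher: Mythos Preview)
Your local argument via the relative Douady space is sound and amounts to a reproof of Kodaira's stability theorem, which the paper also invokes. The globalisation, however, has a genuine gap that you yourself flag: you do not prove that the flat limits of the leaves $\{a'\}\times\mcX''_t$ as $t\to 0$ remain smooth submanifolds with trivial normal bundle. Your adjunction remark is circular---it already assumes the limit is smooth with trivial normal bundle---and neither boundedness of Fano manifolds nor the vanishing $h^1(N)=0$ on the \emph{nearby} leaves prevents the limit cycle from acquiring singularities or breaking into several components; flat limits of smooth subvarieties in smooth ambient families need be neither smooth nor irreducible, and ampleness of $-K_{\mcX_0}$ only bounds, not forbids, the number of pieces. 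Without this step the set $U$ is not shown to be closed, and the argument stalls.

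The paper bypasses this difficulty by a different mechanism. Rather than the Douady space, it uses Wi\'sniewski's rigidity of the Mori cone in Fano families: the projections $p',p''$ are Mori contractions of $\mcX_0$ attached to extremal faces of $\overline{NE}(\mcX_0)$, and since $\overline{NE}(\mcX_t)=\overline{NE}(\mcX/\Delta^n)$ for every $t$, the relative Contraction Theorem manufactures $f'\colon\mcX\to\mcX'$ and $f''\colon\mcX\to\mcX''$ \emph{globally} over $\Delta^n$ in one stroke---no limiting or open--closed argument is needed. The targets are then a priori only normal; Kodaira stability plus generic smoothness leave at worst a finite bad set $B\subset\Delta^n$. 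Over $\mcX\setminus\pi^{-1}(B)$ the relative tangent bundle splits as $T^{f'}\oplus T^{f''}$; a Hartogs-type lemma of Hwang--Mok extends this splitting of $T^\pi$ across the codimension~$\ge 2$ bad locus, and H\"oring's theorem on rationally connected manifolds with split (integrable) tangent bundle then forces every fibre $\mcX_t$ to be a product. This tangent-bundle detour is exactly what replaces your missing non-degeneration step; to make your Douady approach complete you would need an independent substitute for it.
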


It is necessary to assume each fiber $\mcX_t$ being Fano in Theorem \ref{thm. deform Fano product}. The quadric surface $\mbP^1\times\mbP^1$ can be deformed to Hirzebruch surface $\Sigma_m$ with $m$ even. On the other hand, a Hirzebruch surface $\Sigma_m$ with $m\neq 0$ is neither a Fano manifold nor a product of two curves.

The following vanishing result is standard, which helps to study relative Mori contraction in our setting. The later is important for our proof of Theorem \ref{thm. deform Fano product}.

\begin{prop}\label{prop. vanishing of cohomology}
Let $\pi: \mcX\rightarrow\Delta^n$ be a holomorphic map onto the unit disk $\Delta^n$ of dimension $n$ such that all fibers are connected Fano manifolds. Suppose $\mcL$ is a holomorphic line bundle on $\mcX$ such that $\mcL_t:=\mcL|_{\mcX_t}$ is nef on $\mcX_t$ for each $t\in\Delta^n$. Then $H^k(\mcX, \mcL)=0$ for all $k\geq 1$.
\end{prop}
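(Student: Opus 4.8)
The plan is to prove a relative version of Kodaira–Nakano-type vanishing by combining fiberwise vanishing with a Leray spectral sequence argument. First I would establish the fiberwise statement: for each $t\in\Delta^n$, since $\mcX_t$ is Fano, $-K_{\mcX_t}$ is ample, and $\mcL_t$ is nef, so $\mcL_t - K_{\mcX_t} = \mcL_t\otimes(-K_{\mcX_t})$ is ample (nef plus ample is ample). By Kodaira vanishing (or Kawamata–Viehweg, since we only know nef+ample), $H^k(\mcX_t, \mcL_t) = H^k(\mcX_t, (\mcL_t - K_{\mcX_t}) + K_{\mcX_t}) = 0$ for all $k\geq 1$. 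This holds for every fiber uniformly.

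Next I would globalize. The key tool is that $R^q\pi_*\mcL = 0$ for all $q\geq 1$: by Grauert's base-change / semicontinuity theorem, since $H^q(\mcX_t,\mcL_t)=0$ for all $t$ and all $q\geq 1$, the higher direct images vanish, and moreover $\pi_*\mcL$ is locally free (its fiber at $t$ is $H^0(\mcX_t,\mcL_t)$, whose dimension is constant by the vanishing of all higher cohomology together with constancy of the Euler characteristic $\chi(\mcX_t,\mcL_t)$ in the flat family). Then the Leray spectral sequence $H^p(\Delta^n, R^q\pi_*\mcL) \Rightarrow H^{p+q}(\mcX,\mcL)$ degenerates: the $E_2$ page has $E_2^{p,q} = H^p(\Delta^n, R^q\pi_*\mcL)$, which vanishes for $q\geq 1$ by the above, and for $q=0$ vanishes when $p\geq 1$ because $\pi_*\mcL$ is a coherent (in fact locally free) sheaf on the Stein manifold $\Delta^n$ and Cartan's Theorem B gives $H^p(\Delta^n, \mcF) = 0$ for $p\geq 1$ and any coherent $\mcF$. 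Hence $H^k(\mcX,\mcL) = E_2^{k,0} = 0$ for all $k\geq 1$, as desired.

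**Anticipated obstacle:**

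The main point requiring care is the application of base-change: Grauert's theorem guarantees $R^q\pi_*\mcL$ is locally free with the expected fiber when the dimension $h^q(\mcX_t,\mcL_t)$ is constant, but one should argue the vanishing directly rather than through constancy — in fact $R^q\pi_*\mcL=0$ follows because the stalk at any point is zero by cohomology and base change once we know $H^q$ of all nearby fibers vanishes. A cleaner route avoiding subtleties: restrict to a relatively compact polydisk, use that $\pi$ is proper so Grauert's direct image theorem applies, and invoke the standard fact (e.g. in Banica–Stanasila, or as in Hartshorne III.12 adapted to the analytic proper setting) that if $H^i(\mcX_t,\mcL_t)=0$ for all $t$ then $R^i\pi_*\mcL = 0$. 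One must also note that $\Delta^n$ being Stein is what kills the remaining $E_2^{p,0}$ terms; this is where the hypothesis that the base is a polydisk (rather than an arbitrary manifold) is genuinely used. No step is deep, but assembling the base-change input correctly in the analytic category is where I would be most careful.
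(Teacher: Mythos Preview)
Your proposal is correct and follows exactly the same route as the paper: fiberwise vanishing via Kodaira (using that $\mcL_t\otimes(-K_{\mcX_t})$ is ample), then $R^k\pi_*\mcL=0$ by base change, then the Leray spectral sequence together with Cartan's Theorem B on the Stein base $\Delta^n$. The paper's proof is simply a two-sentence compression of precisely this argument.
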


\begin{proof}
Since each $\mcX_t$ is Fano, $H^k(\mcX_t, \mcL_t)=0$ and $R^k\pi_*\mcL=0$ for all $k\geq 1$. Then $H^k(\mcX, \mcL)=H^k(\Delta^n, \pi_*\mcL)=0$ for all $k\geq 1$ since $\Delta^n$ is a Stein manifold.
\end{proof}

We summarize several facts in Proposition \ref{prop. relative MMP for Fano deformation} on relative Mori contraction in our setting. One can consult \cite{KaMaMa87} or \cite{KoMo98} for standard notation and results in relative Minimal Model Program. By \cite[Theorem 1]{Wis09} the Mori cone of each $\mcX_t$ is invariant in our setting, which we restate as conclusion (ii) in Proposition \ref{prop. relative MMP for Fano deformation}.

\begin{prop}\label{prop. relative MMP for Fano deformation}
Let $\pi: \mcX\rightarrow\Delta^n$ be as in Proposition \ref{prop. vanishing of cohomology}. Then for each $t\in\Delta^n$ the following hold.

(i) The natural morphism $\text{Pic}(\mcX)\rightarrow\text{Pic}(\mcX_t)$ is an isomorphism.

(ii) There are natural identities as follows:
\begin{eqnarray}\label{eqn. NE(Xt)=NE(X over delta)}
N_{1}(\mcX_t)=N_{1}(\mcX/\Delta^n) \mbox{ and } \overline{NE}(\mcX_t)=\overline{NE}(\mcX/\Delta^n).
\end{eqnarray}

(iii) Denote by $\Phi: \mcX\rightarrow\mcY$ the relative Mori contraction over $\Delta^n$ associated to an extremal face $F\subset\overline{NE}(\mcX/\Delta^n)$. Then $\Phi_t$ is the Mori contraction associated to $F\subset \overline{NE}(\mcX_t)$ under identification \eqref{eqn. NE(Xt)=NE(X over delta)}. In particular $\mcY$ and $\mcY_t$ are normal varieties.
\end{prop}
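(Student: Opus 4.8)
My plan is to prove the three parts in order, since (i) feeds (ii) and (ii) feeds (iii). For part (i), the plan is to apply Proposition \ref{prop. vanishing of cohomology}. Choosing $\mcL = \mcO_{\mcX}$ gives $H^1(\mcX, \mcO_{\mcX}) = 0$, and more relevantly, applying the vanishing to $\mcL$ and $\mcL^{-1}$ when $\mcL$ restricts trivially on fibers should control the exponential sequence. Concretely, I would run the exponential sequence on $\mcX$ and on $\mcX_t$ and compare: the map $\Pic(\mcX) \to \Pic(\mcX_t)$ sits in a ladder whose relevant $H^1(\mcO)$ and $H^2(\mcO)$ terms vanish (on $\mcX$ by Proposition \ref{prop. vanishing of cohomology}, on $\mcX_t$ because $\mcX_t$ is Fano), so both Picard groups are identified with the corresponding $H^2(-,\mbZ)$, reducing the claim to the topological statement $H^2(\mcX, \mbZ) \cong H^2(\mcX_t, \mbZ)$. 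Since $\Delta^n$ is contractible and $\pi$ is a proper submersion, Ehresmann's theorem makes $\mcX$ deformation retract onto $\mcX_t$, giving the topological isomorphism; one then checks the isomorphism is compatible with restriction so it respects the Hodge/Neron--Severi data and the line-bundle structure. The surjectivity of the restriction map is the only real content — injectivity is clear since $\mcX_t$ is an ample divisor-type situation, but actually it follows from the same cohomology ladder.

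For part (ii), the identity on Néron--Severi groups $N_1(\mcX_t) = N_1(\mcX/\Delta^n)$ follows formally from (i) by duality: $N^1(\mcX/\Delta^n) = \Pic(\mcX)\otimes\mbR$ modulo numerical equivalence relative to $\Delta^n$, which by (i) matches $N^1(\mcX_t)$, and $N_1$ is the dual. The statement about the closed cones $\overline{NE}(\mcX_t) = \overline{NE}(\mcX/\Delta^n)$ is exactly \cite[Theorem 1]{Wis09}, which I would simply invoke; the one thing to verify is that the hypotheses of Wiśniewski's theorem (smooth proper family with Fano fibers) are met, which they are by assumption. I should note the inclusion $\overline{NE}(\mcX_t) \subseteq \overline{NE}(\mcX/\Delta^n)$ is immediate since a curve in $\mcX_t$ is a curve in $\mcX$; the nontrivial direction is the reverse, and that is Wiśniewski's content.

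For part (iii), let $F \subset \overline{NE}(\mcX/\Delta^n)$ be an extremal face and $\Phi: \mcX \to \mcY$ its relative Mori contraction over $\Delta^n$, with supporting divisor $\mcL$ nef over $\Delta^n$, zero exactly on $F$; such $\mcL$ exists and $\Phi = \operatorname{Proj}\bigoplus_m \pi_*(\mcL^{\otimes m})$ by the relative Cone and Contraction theorems (applicable because $-K_{\mcX/\Delta^n}$ is $\pi$-ample, each fiber being Fano). The plan is to show $\Phi_t: \mcX_t \to \mcY_t$ is the contraction of $F$ viewed in $\overline{NE}(\mcX_t)$ via \eqref{eqn. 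NE(Xt)=NE(X over delta)}. The key point is base change: by Proposition \ref{prop. vanishing of cohomology} applied to $\mcL^{\otimes m}$ (which is nef on every fiber, hence the higher direct images vanish and $\pi_*(\mcL^{\otimes m})$ is locally free commuting with base change), restriction of $\bigoplus_m \pi_*(\mcL^{\otimes m})$ to $t$ gives $\bigoplus_m H^0(\mcX_t, \mcL_t^{\otimes m})$, so $\mcY_t = \operatorname{Proj}\bigoplus_m H^0(\mcX_t, \mcL_t^{\otimes m})$, which is precisely the contraction of the face cut out by $\mcL_t$ in $\overline{NE}(\mcX_t)$ — and under the identification of (ii), that face is $F$. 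Normality of $\mcY$ follows because it is a relative Proj of the integral closure of a section ring over a normal base (or: $\Phi_*\mcO_{\mcX} = \mcO_{\mcY}$ and $\mcX$ is smooth, so $\mcY$ is normal by the standard argument), and normality of $\mcY_t$ follows likewise from $\Phi_{t*}\mcO_{\mcX_t} = \mcO_{\mcX_t}$, which is the fiberwise statement of the Contraction theorem for Fano $\mcX_t$. I expect the main obstacle to be the base-change bookkeeping in part (iii): ensuring that $R^0\pi_*(\mcL^{\otimes m})$ genuinely commutes with restriction to $0$ for all $m \geq 1$ uniformly (needed to identify the two Proj's), and that $\Phi$ and $\Phi_t$ share the same fibers, so that no jumping of the contracted locus occurs — this is where one leans essentially on the vanishing Proposition \ref{prop. vanishing of cohomology} together with Grauert's theorem.
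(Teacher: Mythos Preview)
Your proposal is correct and follows essentially the same route as the paper: part (i) via the exponential sequence and contractibility of $\Delta^n$, part (ii) via duality from (i) together with \cite[Theorem 1]{Wis09}, and part (iii) via the relative Contraction Theorem plus a base-change argument showing $\mcY_t=\operatorname{Proj}\bigoplus_m H^0(\mcX_t,\mcL_t^{\otimes m})$. The only cosmetic difference is that in (iii) the paper obtains surjectivity of $H^0(\mcX,\mcL^{\otimes m})\to H^0(\mcX_t,\mcL_t^{\otimes m})$ directly from the ideal-sheaf sequence $0\to\mcL^{\otimes m}(-\mcX_t)\to\mcL^{\otimes m}\to\mcL_t^{\otimes m}\to 0$ and Proposition~\ref{prop. vanishing of cohomology}, whereas you phrase the same vanishing through Grauert's theorem on $R^i\pi_*\mcL^{\otimes m}$; these are equivalent.
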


\begin{proof}
(i) By Proposition \ref{prop. vanishing of cohomology} the map $e: \text{Pic}(\mcX)\rightarrow H^2(\mcX, \mbZ)$, induced by Euler exact sequence induces, is an isomorphism. Since $\mcX_t$ is a Fano manifold, the map $e_t: \text{Pic}(\mcX_t)\rightarrow H^2(\mcX_t, \mbZ)$ is an isomorphism. The natural map $r: H^2(\mcX, \mbZ)\rightarrow H^2(\mcX_t, \mbZ)$ is also an isomorphism since $\Delta^n$ is a contractible topology space. Then (i) follows from the fact that the natural morphism $\text{Pic}(\mcX)\rightarrow\text{Pic}(\mcX_t)$ coincides with the composition $e_t^{-1}\circ r\circ e$.


\medskip

(ii) The linear map $N^1(\mcX/\Delta^n)\rightarrow N^1(\mcX_t)$ is surjective by (i). Then we have injective linear maps
$$\gamma_t: N_1(\mcX_t)\rightarrow N_1(\mcX/\Delta^n) \mbox{ and } \gamma_t^+: \overline{NE(\mcX_t)}\rightarrow \overline{NE}(\mcX/\Delta^n).$$
By Theorem 1 in \cite{Wis09} the local identification $H_2(\mcX_0, \mbR)=H_2(\mcX_t, \mbR)$ yields the identity of Mori cones $\overline{NE}(\mcX_0)=\overline{NE}(\mcX_t)$. Since the local identification $H^2(\mcX_0, \mbR)=H^2(\mcX_t, \mbR)$ gives the identity of $\mcL|_{\mcX_0}$ and $\mcL|_{\mcX_t}$ for any $\mcL\in\Pic(\mcX)$, we know by (i) that
\begin{eqnarray}\label{eqn. NE(Xt)=NE(X0)}
\gamma_t^+(\overline{NE}(\mcX_t))=\gamma_0^+(\overline{NE}(\mcX_0)), & \gamma_t(N_1(\mcX_t))=\gamma_0(N_1(\mcX_0)).
\end{eqnarray}
The abelian group $Z_1(\mcX/\Delta^n)$ (resp. semigroup $Z_1^+(\mcX/\Delta^n)$), generated by reduced irreducible curves that are contracted by $\pi$, is isomorphic to $\bigoplus\limits_{s\in\Delta^n}Z_1(\mcX_s)$ (resp. $\bigoplus\limits_{s\in\Delta^n}Z_1^+(\mcX_s)$). Then \eqref{eqn. NE(Xt)=NE(X0)} implies that $\gamma_t$ and $\gamma_t^+$ are bijections.

\medskip

(iii) By Contraction Theorem there exist $\mcL\in\text{Pic}(\mcX)$ and a homomorphism $\Phi: \mcX\rightarrow\mcY:=\text{Proj}_{\Delta^n}R(\mcL)$ over $\Delta^n$ such that $\Phi_*\mcO_{\mcX}=\mcO_{\mcY}$ and
$$F=\{\eta\in \overline{NE}(\mcX/\Delta^n)\mid \deg\mcL|_\eta=0\},$$
where $R(\mcL):=\bigoplus\limits_{m\geq 0}H^0(\mcX, \mcL^{\otimes m})$. Then (iii) follows from (ii) and the claim that $H^0(\mcX, \mcL^{\otimes m})\rightarrow H^0(\mcX_t, \mcL^{\otimes m}_t)$ is surjective for all $m\geq 0$.



The Cartier divisor $\mcL^{\otimes m}\otimes\mcO_{\mcX}(-\mcX_t)$ is nef on $\mcX_s$ for all $s\in\Delta^n$, which implies that $H^1(\mcX, \mcL^{\otimes m}\otimes\mcO_{\mcX}(-\mcX_t))=0$ by Proposition \ref{prop. vanishing of cohomology}. Hence the homomorphism $H^0(\mcX, \mcL^{\otimes m})\rightarrow H^0(\mcX_t, \mcL^{\otimes m}_t)$, induced by the short exact sequence
$$0\rightarrow\mcL^{\otimes m}\otimes\mcO_{\mcX}(-\mcX_t)\rightarrow\mcL^{\otimes m}\rightarrow\mcL^{\otimes m}_t\rightarrow 0,$$
is surjective, verifying the claim.
\end{proof}

Local verification of Theorem \ref{thm. deform Fano product} is closely related to Kodaira's stability as follows.

\begin{prop}\cite[Theorem 4]{Kod63} \label{prop. Kodaria stability}
Let $p: W\rightarrow B$ be a smooth map between complex manifolds such that each fiber $F$ is a compact connected complex manifold with $H^1(F, \mcO_F)=0$, and let $\omega: \mcW\rightarrow S\ni 0$ be a smooth family of complex manifold with $\omega^{-1}(0)=W$. Then there exists a commutative diagram
\begin{eqnarray*}
\xymatrix{\mcW_U\ar[d]_-{\omega_U}\ar[r]^-{\mcP} & \mcB\ar[ld]^-{\phi} \\
U, &
}
\end{eqnarray*}
where $U$ is an analytic open neighborhood $U$ of $0\in S$, $\mcB$ is a complex manifold smooth over $U$, $\mcW_U:=\omega^{-1}(U)$, $\omega_U:=\omega|_{\mcW_U}$, and the fiber $\mcP_0: \mcW_0\rightarrow\mcB_0$ at $0\in U$ coincides with $p: W\rightarrow B$.
\end{prop}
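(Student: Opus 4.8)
The plan is to deduce the statement from the deformation theory of the fibres of $p$, regarded as compact complex submanifolds of the total space $W$; equivalently, from the (relative) Douady space of $\mcW\to S$. The first point I would record is that for $b\in B$ the normal bundle of the fibre $F_b:=p^{-1}(b)$ in $W$ is \emph{trivial}: because $p$ is a submersion, $N_{F_b/W}$ is canonically the pull-back of the vector space $T_bB$, so $N_{F_b/W}\cong\mcO_{F_b}^{\oplus\dim B}$. Hence $h^0(F_b,N_{F_b/W})=\dim B$ and, crucially, $H^1(F_b,N_{F_b/W})\cong H^1(F_b,\mcO_{F_b})^{\oplus\dim B}=0$ by hypothesis. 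Thus the assumption $H^1(F,\mcO_F)=0$ is exactly what makes every fibre an unobstructed submanifold of $W$ carrying the "correct" number $\dim B$ of first-order deformations.

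Next I would identify $p$, up to a local isomorphism, with the universal family over an open subset of the Douady space $D(W)$. The classifying map $\iota\colon B\to D(W)$, $b\mapsto[F_b]$, has differential at $b$ equal to the Kodaira--Spencer map $T_bB\to H^0(F_b,N_{F_b/W})$, and this map is injective because fibres over distinct points are disjoint, so no nonzero tangent direction in $B$ can induce the trivial first-order deformation of $F_b$ inside $W$. By the vanishing above and the standard deformation theory of $D(W)$ (tangent space $H^0(F_b,N_{F_b/W})$, obstructions in $H^1$), the space $D(W)$ is smooth at $[F_b]$ of dimension $\dim B$; comparing dimensions shows $\iota$ is a local isomorphism, and since distinct points of $B$ give distinct subspaces it is injective, hence identifies $B$ with an open $B^{\circ}\subset D(W)$ over which the universal family is $W$ itself (its graph being $\{\,p(w)=b\,\}$). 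I would then form the relative Douady space $D(\mcW/S)\to S$, whose central fibre is $D(W)$; the vanishing persists, so this map is smooth along $B^{\circ}$, and after shrinking $S$ to a neighbourhood $U$ of $0$ one extracts an open $\mcB\subset D(\mcW/S)$, smooth over $U$, with $\mcB_0=B^{\circ}$ and classifying map $\phi\colon\mcB\to U$. Letting $\mcZ\subset\mcW_U\times_U\mcB$ be the universal family, with its smooth projection $p_{\mcB}\colon\mcZ\to\mcB$ and its projection $q\colon\mcZ\to\mcW_U$, the map $q$ restricts to an isomorphism over $0$; since $\mcZ$ and $\mcW_U$ are both smooth over $U$ of the same relative dimension $\dim W$ and $p_{\mcB}$ is proper, $q$ is an isomorphism after one further shrinking of $U$, and $\mcP:=p_{\mcB}\circ q^{-1}\colon\mcW_U\to\mcB$ is smooth over $U$ and restricts to $p\colon W\to B$ over $0$, as required.

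I expect the main obstacle to be bookkeeping caused by non-compactness rather than anything conceptual: neither $W$ nor $B$ is compact --- only the fibres $F$ are --- so one must take care that $\mcB_0$ can be arranged to equal $B^{\circ}$ on the nose and, above all, that $q$ becomes an isomorphism over a genuine open neighbourhood $U$ of $0\in S$, which requires exhausting $W$ by relatively compact opens and applying openness of the isomorphy locus uniformly. This is precisely the delicate step that Kodaira's original argument handles directly, by solving the relevant $\bar\partial$-equation on the fibres and patching the resulting local trivialisations; one could equally carry that analysis out here, the Douady-space route being only a repackaging of it.
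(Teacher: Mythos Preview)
The paper does not supply its own proof of this proposition: it is quoted directly as \cite[Theorem 4]{Kod63} and used as a black box. So there is nothing to compare against, and your task reduces to whether your sketch stands on its own.

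Your Douady-space approach is the standard modern repackaging of Kodaira's theorem and is essentially sound. Two remarks. First, your justification for the injectivity of the Kodaira--Spencer map $T_bB\to H^0(F_b,N_{F_b/W})$ (``fibres over distinct points are disjoint'') conflates infinitesimal and global behaviour; the clean argument is simply that $N_{F_b/W}\cong\mcO_{F_b}\otimes T_bB$ and the map sends $v$ to the constant section $1\otimes v$, which is an isomorphism because $F_b$ is compact and connected (so $H^0(F_b,\mcO_{F_b})=\C$). Second, you have correctly located the only genuinely delicate point: passing from ``$q_0$ is an isomorphism'' to ``$q$ is an isomorphism over a neighbourhood $U$'' is not automatic, because neither $\mcZ\to\mcW_U$ nor $\mcB\to U$ is proper. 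One must argue locally on $\mcW$ and use that the locus where $q$ is \'etale is open, together with a disjointness/covering argument for the nearby submanifolds; your acknowledgement that Kodaira's original $\bar\partial$-argument handles exactly this is appropriate, and for the purposes of this note the citation suffices.
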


Let us collect two results on split vector bundle from \cite{HM98} and \cite{Hor07}, which will be useful in our proof of Theorem \ref{thm. deform Fano product}.

\begin{prop}\cite[page 409]{HM98}\label{prop. extending direct summmand bundles}
Let $\mcV$ be a vector bundle over a connected complex manifold $X$. Suppose there is a complex subvariety $D\subset X$ and vector bundles $\mcV_1$ and $\mcV_2$ over $X\setminus D$ such that $\dim D \leq \dim X - 2$ and $\mcV|_{X\setminus D}=\mcV_1\oplus\mcV_2$. Then $\mcV_1$ and $\mcV_2$ can be extended uniquely as vector bundles $\mcV'_1$ and $\mcV'_2$ over $X$ such that $\mcV=\mcV'_1\oplus\mcV'_2$.
\end{prop}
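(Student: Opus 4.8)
The plan is to recast the hypothesis as the existence of an idempotent endomorphism of $\mcV$ defined away from $D$, to extend that endomorphism across $D$ using the codimension bound, and then to recover the decomposition from it; the point is that a splitting into subbundles is the same datum as a fibrewise idempotent of locally constant rank.

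On $X\setminus D$ identify $\mcV_1$ and $\mcV_2$ with the two summands of $\mcV|_{X\setminus D}$ and let $p$ be the projection onto $\mcV_1$ along $\mcV_2$; it is a holomorphic section of the endomorphism bundle $\mathcal{E}nd(\mcV)$ over $X\setminus D$ with $p^2=p$, $\operatorname{im}p=\mcV_1$, and $\ker p=\mcV_2$. Since $\mathcal{E}nd(\mcV)$ is a vector bundle over all of $X$ and $D\subset X$ is a closed analytic subset with $\dim D\le\dim X-2$, choosing a local trivialization reduces the extension of $p$ across $D$ to the classical fact that holomorphic functions extend uniquely across analytic subsets of codimension at least two (equivalently, $\mathcal{O}_X$-reflexivity of $\mathcal{E}nd(\mcV)$ on the manifold $X$). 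This yields a holomorphic section $\tilde p\in H^0(X,\mathcal{E}nd(\mcV))$ restricting to $p$ on $X\setminus D$, and this extension is unique because $X\setminus D$ is dense.

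Next I would verify that $\tilde p$ still does the job. The section $\tilde p^2-\tilde p$ vanishes on the dense open set $X\setminus D$, hence $\tilde p^2=\tilde p$ on $X$. Then $x\mapsto\operatorname{rank}\tilde p_x=\operatorname{tr}\tilde p_x$ is a continuous $\mbZ$-valued function on the connected manifold $X$, hence constant (equal to $\operatorname{rank}\mcV_1$, using that $X\setminus D$ is connected). By the standard constant-rank argument for idempotent bundle endomorphisms, $\mcV'_1:=\operatorname{im}\tilde p$ and $\mcV'_2:=\ker\tilde p=\operatorname{im}(\operatorname{id}_{\mcV}-\tilde p)$ are subbundles of $\mcV$ with $\mcV=\mcV'_1\oplus\mcV'_2$, and by construction $\mcV'_i|_{X\setminus D}=\mcV_i$. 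For uniqueness, any splitting $\mcV=\mcW_1\oplus\mcW_2$ with $\mcW_i|_{X\setminus D}=\mcV_i$ has associated idempotent $q\in H^0(X,\mathcal{E}nd(\mcV))$ agreeing with $p$ on $X\setminus D$, whence $q=\tilde p$ and $\mcW_i=\mcV'_i$.

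The only step that is not purely formal is the extension of $p$ across $D$; this is exactly where the hypothesis $\dim D\le\dim X-2$ is used, via the Riemann-type removable-singularity theorem. Once $\tilde p$ is produced, everything else is elementary linear algebra over $\mcO_X$ together with connectedness of $X$ and of $X\setminus D$, so I do not anticipate any further obstacle.
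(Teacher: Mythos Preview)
Your argument is correct, but it follows a different route from the paper's. The paper localizes so that $\mcV$ is trivial, and then encodes the splitting as a classifying map $\Phi\colon X\setminus D\to Gr(r_1,V)\times Gr(r_2,V)$; the key observation is that the locus $E=\{(V_1,V_2):V_1\cap V_2\neq 0\}$ is an ample divisor, so $\Phi$ lands in an affine variety and extends across $D$ by Hartogs. Your approach instead packages the splitting as the idempotent section $p$ of $\mathcal{E}nd(\mcV)$, extends it across $D$ by the second Riemann extension theorem applied to local components, and then checks that idempotency and constant rank persist by density and by reading the rank as the trace. Both proofs ultimately rest on Hartogs-type extension across codimension~$\ge 2$, but yours is more elementary and linear-algebraic, avoiding Grassmannians and the ampleness of $E$; the paper's version is more geometric and makes transparent why the direct-sum condition survives (the classifying map never leaves the affine open $Y\setminus E$). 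Either way the uniqueness is immediate from density of $X\setminus D$, and your verification that $\tilde p^2=\tilde p$ and that $\operatorname{tr}\tilde p$ is locally constant is exactly what is needed to conclude that $\operatorname{im}\tilde p$ and $\ker\tilde p$ are subbundles.
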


\begin{proof}
We repeat briefly the corresponding argument on page 409 of \cite{HM98}, since no explicit statement is given there. The problem is local, and we can assume that $X=\Delta^n$ and $\mcV$ is the trivial vector bundle $X\times V$. Then there is a holomorphic map $\Phi: X\setminus D\rightarrow Y:=Gr(r_1, V)\times Gr(r_2, V)$ sending $x\in X\setminus D$ to $((\mcV_1)_x, (\mcV_2)_x)$, where $r_1$ and $r_2$ is the rank of $\mcV_1$ and $\mcV_2$ respectively. The closed subset $E:=\{(V_1, V_2)\in Y\mid V_1\cap V_2\neq 0\}$ is an ample prime divisor of $Y$. Since $\Phi(X\setminus D)$ is contained in the affine variety $Y\setminus E$, we can extend $\Phi$ to $X$ uniquely by Hartogs' Extension Theorem.
\end{proof}


\begin{prop}\cite[Theorem 1.4]{Hor07} \label{prop. RC-mfds with split tangent bundles}
Let $X$ be a rationally connected manifold such that $T_X=V_1\oplus V_2$. If $V_1$ or $V_2$ is integrable, then $X$ is isomorphic to a product $Y_1\times Y_2$ such that $V_j=p^*_{Y_j}T_{Y_j}$ for $j=1, 2$.
\end{prop}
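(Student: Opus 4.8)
My plan is to turn the integrable factor into a fibration and then use rational connectedness twice: once to make the fibration algebraic, and once to force the complementary distribution to be integrable and flat, so that $X$ becomes a genuine product. Assume without loss of generality that $V_1$ is integrable. By the Frobenius theorem $V_1$ defines a holomorphic foliation $\mathcal{F}$ with $T_{\mathcal{F}}=V_1$, which is regular (nonsingular) precisely because $V_1$ is a subbundle of $T_X$ and not merely a subsheaf. Since $X$ is rationally connected it is projective and simply connected, and I record the splitting $\det T_X=\det V_1\otimes\det V_2$.

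The first, and I expect hardest, step is to show that $\mathcal{F}$ is algebraically integrable with rationally connected leaves. The idea is to produce a rational curve $C$ contained in a leaf along which $T_{\mathcal{F}}|_C=V_1|_C$ is ample, and then to invoke the algebraic integrability criterion of Bogomolov--McQuillan / Kebekus--Sol\'a Conde--Toma, which guarantees that the leaf through a general point of $C$ is algebraic and rationally connected. To locate such a curve one restricts the splitting $T_X=V_1\oplus V_2$ to minimal rational curves $f:\mbP^1\to X$: each of $f^{*}V_1,f^{*}V_2$ is a direct summand of $f^{*}T_X$, hence a subsum of its line-bundle decomposition, and this controls both the tangency of rational curves to $\mathcal{F}$ and the positivity of $V_1$ along them. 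Granting this, the family of leaf closures yields a smooth proper fibration $\phi:X\to Y_2$ with $T_{X/Y_2}=V_1$ and rationally connected fibres isomorphic to some $Y_1$; moreover the composite $V_2\hookrightarrow T_X\xrightarrow{d\phi}\phi^{*}T_{Y_2}$ is an isomorphism, since $V_2\cap V_1=0$ and the ranks agree, so $V_2\cong\phi^{*}T_{Y_2}$. Note that $Y_2=\phi(X)$ is again rationally connected, hence simply connected.

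Next I would show the complementary distribution $V_2$ is integrable, i.e. that the O'Neill/curvature tensor $A\in H^{0}(X,\wedge^{2}V_2^{*}\otimes V_1)$, sending a pair of sections of $V_2$ to the $V_1$-component of their bracket, vanishes. Using $V_1=T_{X/Y_2}$, the identification $V_2\cong\phi^{*}T_{Y_2}$, and pushing forward, $A$ becomes a section of $\wedge^{2}T_{Y_2}^{*}\otimes\mathcal{A}$, where $\mathcal{A}:=\phi_{*}T_{X/Y_2}$. For a general free rational curve $\gamma:\mbP^1\to Y_2$ the pulled-back fibration $X_\gamma:=X\times_{Y_2}\mbP^1\to\mbP^1$ carries the Ehresmann connection induced by $V_2$; over the one-dimensional base $\mbP^1$ this connection is automatically flat, so by Ehresmann's theorem and $\pi_1(\mbP^1)=0$ it is a trivial product, whence $\mathcal{A}|_\gamma\cong H^{0}(Y_1,T_{Y_1})\otimes\mathcal{O}_{\mbP^1}$ is trivial. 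Writing $T_{Y_2}|_\gamma=\mathcal{O}(2)\oplus E$ with $E$ globally generated and $\mathcal{O}(2)$ the tangent direction $\dot\gamma$, the contraction $\iota_{\dot\gamma}A$ is a section of $\mathcal{O}(-2)\otimes E^{*}\otimes\mathcal{A}|_\gamma$, a sum of line bundles of negative degree, hence zero; thus $A(\dot\gamma,\,\cdot\,)=0$. As $Y_2$ is rationally connected, the tangent directions $\dot\gamma$ of free rational curves span $T_{Y_2,y}$ at a general point, and since $A$ is alternating this forces $A=0$. Therefore $V_2$ is integrable and defines a foliation transverse to the fibres of $\phi$ with vanishing curvature.

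Finally, $V_2$ is a holomorphic foliation transverse to the fibres of the proper submersion $\phi:X\to Y_2$ over the simply connected base $Y_2$, so it has trivial holonomy: each leaf maps isomorphically onto $Y_2$ and is a global section of $\phi$. Fixing a fibre $Y_1=\phi^{-1}(y_0)$ and sending $(x_0,y)$ to the point where the leaf through $x_0$ meets $\phi^{-1}(y)$ gives an isomorphism $Y_1\times Y_2\xrightarrow{\sim}X$. Under it the fibres of $\phi$ are the $Y_1$-slices, so $V_1=T_{X/Y_2}=p_{Y_1}^{*}T_{Y_1}$, while the leaves of $V_2$ are the $Y_2$-slices, so $V_2=p_{Y_2}^{*}T_{Y_2}$, completing the proof. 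The main obstacle is the algebraic integrability step; the remainder is a robust combination of the positivity of rational curves with Ehresmann's theorem.
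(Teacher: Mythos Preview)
The paper does not supply its own proof of this proposition: it is quoted from H\"oring \cite{Hor07} as a black box and immediately followed by the proof of Theorem~\ref{thm. deform Fano product}. So there is no in-paper argument to compare your attempt against.

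Your sketch is in fact a reasonable outline of how H\"oring's original proof proceeds---first turning the integrable summand into a fibration with rationally connected fibres via a Bogomolov--McQuillan/KSCT type argument, then forcing integrability of the complementary factor, and finishing with an Ehresmann/holonomy argument over a simply connected base. Two points would need more care if you were writing this out in full. First, the passage from the KSCT criterion to a \emph{smooth proper} fibration $\phi:X\to Y_2$ with $T_{X/Y_2}=V_1$ everywhere: the criterion gives algebraic, rationally connected leaf closures only through \emph{general} points, and upgrading this to a global submersion with smooth leaf space requires an additional argument (compactness of all leaves plus trivial holonomy). Second, your claim that restricting the splitting to minimal rational curves ``controls both the tangency of rational curves to $\mathcal{F}$ and the positivity of $V_1$ along them'' is asserted rather than shown; producing a curve tangent to a leaf with $V_1$ ample along it is exactly where the work lies. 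Your curvature computation for the vanishing of the O'Neill tensor along free rational curves in $Y_2$, by contrast, is clean and correct. Since the paper merely cites the result, your write-up already exceeds what is needed here.
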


\begin{proof}[Proof of Theorem \ref{thm. deform Fano product}]
The problem is local, and we can assume $S=\Delta^n\ni 0$. By Proposition \ref{prop. relative MMP for Fano deformation} we can extend projections $f'_0: \mcX_0\rightarrow\mcX'_0$ and $f''_0: \mcX_0\rightarrow\mcX''_0$ to relative Mori contractions $f':\mcX\rightarrow\mcX'$ and $f'':\mcX\rightarrow\mcX''$. Moreover for any $t\in\Delta^n$, fibers $\mcX'_t$ and $\mcX''_t$ are normal, and morphisms $f'_t$ and $f''_t$ are Mori contractions. Denote by $\pi': \mcX'\rightarrow\Delta^n$ and $\pi'': \mcX''\rightarrow\Delta^n$ the corresponding morphisms over $\Delta^n$.

By Proposition \ref{prop. Kodaria stability} and Proposition \ref{prop. relative MMP for Fano deformation}(ii), there exists an analytic open neighborhood $U$ of $0\in\Delta^n$ such that $\mcX'_U$ and $\mcX''_U$ are complex manifolds, and
\begin{eqnarray*}
f'_U: \mcX_U\rightarrow\mcX'_U & \pi'_U: \mcX'_U\rightarrow U, \\
f''_U: \mcX_U\rightarrow\mcX''_U & \pi''_U: \mcX''_U\rightarrow U,
\end{eqnarray*}
are holomorphic maps.
Then by generic smoothness property of $f'$ and $f''$ there is a finite subset $B\subset\Delta^n\setminus U$ such that $\mcX'_\Omega$ and $\mcX''_\Omega$ are complex manifolds, and
\begin{eqnarray*}
f'_\Omega: \mcX_\Omega\rightarrow\mcX'_\Omega & \pi'_\Omega: \mcX'_\Omega\rightarrow \Omega, \\
f''_\Omega: \mcX_\Omega\rightarrow\mcX''_\Omega & \pi''_\Omega: \mcX''_\Omega\rightarrow \Omega,
\end{eqnarray*}
are holomorphic maps, where $\Omega:=\Delta^n\setminus B$.

The morphism
\begin{eqnarray*}
f: \mcX\rightarrow \mcX'\times_{\Delta^n}\mcX''
\end{eqnarray*}
over $\Delta^n$ induced by $f'$ and $f''$ contracting no curves on $\mcX$. It follows that there exists a positively codimensional closed subset $D$ of $\mcX_B:=\pi^{-1}(B)$ such that the relative tangent bundles $T^{f'}$ and $T^{f''}$ are holomorphic on $\mcX\setminus D$ and
$$T^\pi|_{\mcX\setminus D}=V'\oplus V'',$$
where $V':=(T^{f'})|_{\mcX\setminus D}$ and $V'':=(T^{f''})|_{\mcX\setminus D}$.
By Proposition \ref{prop. extending direct summmand bundles} we can extend $V'$ and $V''$ uniquely as vector bundles $\widetilde{V}'$ and $\widetilde{V}''$ over $\mcX$ such that $T^\pi=\widetilde{V}'\oplus \widetilde{V}''$. The distributions $\widetilde{V}'$ and $\widetilde{V}''$ are both integrable, since
\begin{eqnarray}\label{eqn. tilde(V1)= T(Phi) on open part}
\widetilde{V}'|_{\mcX\setminus D}=V'=(T^{f'})|_{\mcX\setminus D}, & \widetilde{V}''|_{\mcX\setminus D}=V''=(T^{f''})|_{\mcX\setminus D}.
\end{eqnarray}
We can apply Proposition \ref{prop. RC-mfds with split tangent bundles} to $\mcX_t$ with $t\in\Delta^n$, since Fano manifolds are rationally connected. It follows that $\mcX_t\cong \mcX'_t\times\mcX''_t$ for each $t\in\Delta^n$.

The leaves of $\widetilde{V}'$ (resp. $\widetilde{V''}$) are all Fano manifolds and hence simply-connected. There is a submersion $\phi': \mcX\rightarrow \mcY'$ (resp. $\phi'': \mcX\rightarrow \mcY''$) to a smooth complex variety $\mcY'$ (resp. $\mcY''$) over $\Delta^n$ such that $\widetilde{V}'=T^{\phi'}$ (resp. $\widetilde{V}''=T^{\phi''}$), where the smoothness of $\mcY'$ (resp. $\mcY''$) at an arbitrary point follows from the the simple-connectedness of the corresponding $\widetilde{V}'$-leaf (resp. $\widetilde{V}''$-leaf), see \cite[Proposition 3.7]{Mol88}.
Then \eqref{eqn. tilde(V1)= T(Phi) on open part} implies that $\mcY'$, $\mcY''$, $\phi'$ and $\phi''$ coincide with $\mcX'$, $\mcX''$, $f'$ and $f''$ respectively. The conclusion follows.
\end{proof}

It is proved by Hwang and Mok \cite{HM98} that irreducible Hermitian symmetric spaces of compact type are rigid under K\"{a}hler deformation. Combining with Theorem \ref{thm. deform Fano product}, we get the following corollary immediately.

\begin{cor}
Let $\pi: \mcX\rightarrow S\ni 0$ be a smooth family of connected Fano manifolds over a connected complex manifold $S$. Suppose that $\mcX_0$ is a (possibly reducible) Hermitian symmetric space of compact type. Then we have $\mcX_t\cong\mcX_0$ for all $t\in S$.
\end{cor}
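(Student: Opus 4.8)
The plan is to peel off the irreducible Hermitian symmetric factors of $\mcX_0$ one at a time by repeatedly applying Theorem \ref{thm. deform Fano product}, then to invoke the Hwang--Mok rigidity theorem on each factor family separately, and finally to reassemble.

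First I would write $\mcX_0\cong M_1\times\cdots\times M_k$, where each $M_j$ is an irreducible Hermitian symmetric space of compact type; such a decomposition exists (and is unique up to permutation) by the classical structure theory, and each $M_j$ is a Fano manifold. If $k=1$ there is nothing to split and one passes directly to the rigidity step; otherwise I apply Theorem \ref{thm. deform Fano product} to the product decomposition $\mcX_0\cong M_1\times(M_2\times\cdots\times M_k)$, obtaining holomorphic maps over $S$ exhibiting $\mcX$ as a relative product $\mcX\cong\mcX'\times_S\mcR$ with $\mcX'_0\cong M_1$, $\mcR_0\cong M_2\times\cdots\times M_k$, and $\mcX_t\cong\mcX'_t\times\mcR_t$ for every $t\in S$. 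Before iterating I would check that $\mcR\to S$ is again a legitimate input for Theorem \ref{thm. deform Fano product}: it is a submersion onto $S$ (the map produced in the proof of Theorem \ref{thm. deform Fano product} is smooth), its fibers are connected, and they are Fano, because $\mcX_t\cong\mcX'_t\times\mcR_t$ with $\mcX_t$ Fano forces both factors to be Fano (restricting the ample bundle $-K_{\mcX'_t\times\mcR_t}$ to a slice $\{x\}\times\mcR_t$ gives $-K_{\mcR_t}$, which is therefore ample). Iterating on $\mcR$, then on the next remainder, and so on, after $k-1$ applications of Theorem \ref{thm. deform Fano product} I obtain a decomposition $\mcX\cong\mcX^{(1)}\times_S\cdots\times_S\mcX^{(k)}$ of families over $S$, in which each $\pi^{(j)}\colon\mcX^{(j)}\to S$ is a smooth family of connected Fano manifolds with $\mcX^{(j)}_0\cong M_j$, and $\mcX_t\cong\mcX^{(1)}_t\times\cdots\times\mcX^{(k)}_t$ for all $t\in S$.

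Next I would apply the deformation rigidity of Hwang--Mok \cite{HM98} to each $\mcX^{(j)}\to S$. Every fiber is a Fano manifold, hence projective and in particular K\"ahler, so $\mcX^{(j)}$ is a K\"ahler family whose fiber over $0$ is the irreducible Hermitian symmetric space $M_j$ of compact type; by \cite{HM98} the condition ``$\mcX^{(j)}_t\cong M_j$'' is then a locally constant condition on the base, so since $S$ is connected and it holds at $0$ it holds for all $t\in S$. Consequently $\mcX^{(j)}_t\cong M_j$ for every $t\in S$ and every $j$, and therefore $\mcX_t\cong\mcX^{(1)}_t\times\cdots\times\mcX^{(k)}_t\cong M_1\times\cdots\times M_k\cong\mcX_0$ for all $t\in S$, as claimed.

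The step I expect to require the most care is the induction in the first half of the argument: one must verify at each stage that the remaining factor family still has smooth total space over $S$ and connected Fano fibers, so that Theorem \ref{thm. deform Fano product} applies once more, and one must be sure that the Hwang--Mok conclusion propagates from an analytic neighborhood of $0$ to the whole connected base $S$. Both points are handled as indicated above, and everything else is formal; in particular, only the existence of the relative product structures in Theorem \ref{thm. deform Fano product} is used, not their uniqueness.
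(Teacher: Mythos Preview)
Your proposal is correct and is precisely the argument the paper intends: the paper gives no detailed proof for this corollary, stating only that it follows ``immediately'' from Theorem~\ref{thm. deform Fano product} combined with the Hwang--Mok rigidity result \cite{HM98}, and your inductive splitting into irreducible factors followed by fiberwise application of \cite{HM98} is exactly the implicit argument. The extra care you take---checking that the factor families produced by Theorem~\ref{thm. deform Fano product} are again smooth with connected Fano fibers, and that rigidity propagates over the connected base---is appropriate elaboration rather than a different route.
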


\textbf{\normalsize Acknowledgements.} I want to thank Professor Jun-Muk Hwang and Professor Baohua Fu for help related with this note. This work is supported by National Researcher Program of National Research Foundation of Korea (Grant No. 2010-0020413).

\bigskip

Korea Institute for Advanced Study, Seoul, Republic of Korea

qifengli@kias.re.kr

 \end{document}